\newtheorem{thm}{Theorem}[section]
\newtheorem*{thmxx}{Main Theorem}
\newtheorem*{thmxx*}{Main Theorem*}
\newtheorem*{thmxx**}{Main Theorem**}
\newtheorem{defn}[thm]{Definition}
\newtheorem{lem}[thm]{Lemma}
\newtheorem{prop}[thm]{Proposition}
\newtheorem*{conjecture*}{Conjecture}
\providecommand{\customgenericname}{}
\newcommand{\newcustomtheorem}[2]{%
	\newenvironment{#1}[1]
	{%
		\renewcommand\customgenericname{#2}%
		\renewcommand\theinnercustomgeneric{##1}%
		\innercustomgeneric
	}
	{\endinnercustomgeneric}
}
\theoremstyle{definition}
\theoremstyle{remark}
\newtheorem{rem}[thm]{Remark}
\numberwithin{equation}{section}
\newcommand{\DeclareAutoPairedDelimiter}[3]{%
	\expandafter\DeclarePairedDelimiter\csname Auto\string#1\endcsname{#2}{#3}%
	\begingroup\edef\x{\endgroup
		\noexpand\DeclareRobustCommand{\noexpand#1}{%
			\expandafter\noexpand\csname Auto\string#1\endcsname*}}%
	\x}
\DeclareAutoPairedDelimiter{\abs}{\lvert}{\rvert}
\DeclareAutoPairedDelimiter{\norm}{\lVert}{\rVert}
\DeclareAutoPairedDelimiter{\bra}{(}{ )}
\DeclareAutoPairedDelimiter{\pra}{[}{]}
\DeclareAutoPairedDelimiter{\set}{\{}{\}}
\DeclareAutoPairedDelimiter{\skp}{\langle}{\rangle}
\DeclareMathAlphabet{\mathup}{OT1}{\familydefault}{m}{n}
\newcommand{\dx}[1]{\mathop{}\!\mathup{d} #1}
\DeclareMathOperator{\loc}{loc}
\DeclareMathOperator{\tr}{tr}
\DeclareMathOperator{\Lip}{Lip}
\newcommand{\N}{\mathbb{N}}
\newcommand{\R}{\mathbb{R}}
\newcommand{\cC}{\ensuremath{\mathcal C}}
\newcommand{\cN}{\ensuremath{\mathcal N}}
\newcommand{\coeff}{\ensuremath{c}}
\title[The regular part of the free boundary close to singularities]{The structure of the regular part of the free boundary close to singularities in the obstacle problem}
\author{Simon Eberle$^1$}
\address{$^1$Basque Center for Applied Mathematics, Spain}
\email{seberle@bcamath.org}
\author{Henrik Shahgholian$^2$}
\address{$^2$Department of Mathematics, KTH Royal Institute of Technology, Sweden}
\email{henriksh@kth.se}
\author{Georg S. Weiss$^3$}
\address{$^3$Faculty of Mathematics, University of Duisburg-Essen, Germany}
\email{georg.weiss@uni-due.de}
\let\rho\varrho
\let\phi\varphi
\let\epsilon\varepsilon
\let\emptyset\varnothing
\thanks{H. Shahgholian was supported in part by the Swedish Research Council. {S. Eberle was supported by the European Research Council (ERC) under the Grant Agreement No 948029}}
\begin{document}
	
	\begin{abstract}
We prove the –to the best knowledge of the authors– first result on the fine asymptotic behavior of the regular part of the free boundary of the obstacle problem close to singularities. The result is motivated by a recent answer to a long standing conjecture concerning the classification of global solutions of the obstacle problem.
	\end{abstract}

	\maketitle

	\tableofcontents

\section{Introduction}

Many important problems in science, finance and engineering can be modeled by PDEs that have a-priori unknown interfaces. Such problems are called \emph{free boundary problems} and they have been a major area of research in the field of PDEs for at least half a century.
The \emph{obstacle problem} arguably is the most extensively   studied free boundary problem. It may be derived from a simple model for spanning an elastic membrane over some given (concave) obstacle. Alternatively it can be derived from a certain setting in the Stefan problem, the simplest model for the melting of ice, or from the Hele-Shaw problem. 
The obstacle problem may be expressed in the scalar nonlinear partial differential equation 
\begin{align}\label{eq:obstacle-local}
\Delta u=\coeff(x)\chi_{\{u>0\}}, ~ u \geq 0 , ~ \text{ where } \coeff \in \Lip(\R^N), ~c \geq \coeff_0 >0 \text{ in } \R^N,
\end{align}
and  the set $\{ u>0\}$ models 
the phase where the membrane does not touch the obstacle
respectively  the water phase in the stationary Stefan problem. 
The set $\{ u=0\}$ is called {\em coincidence set} and the interface $\partial\{ u>0\}$ is called the {\em free boundary}. \\
From the point of view of mathematics, the most challenging question in free boundary problems is the \emph{structure and the regularity of the free boundary}. The development of contemporary regularity theory for free boundaries started with the seminal work of L. Caffarelli \cite{Caffarelli_regularity_free_boundary_higher_dimensions_Acta77} in the late 70s, and since then has been a very active field of research. \\ As the obstacle problem has been extensively studied over the last $4$ decades  of the questions originally posed only the hard problems have remained unsolved. Among them are the fine structure of the singular part of the free boundary and the behavior of the regular part of the free boundary close to singularities. Towards answering the first question there have been many impressive results in recent years (cf. A. Figalli, J. Serra \cite{Figalli-Serra-2020-Inventiones}, A. Figalli, J. Serra and X. Ros-Oton \cite{Figalli-Ros-Oton-Serra-Generic}, O. Savin and H. Yu \cite{SavinYu2020regularity}). While the singular set has thus been extensively studied there has -- to the authors' best knowledge -- been  no result on the behavior of the \emph{regular part of the free boundary close to singularities}.

\subsection{The free boundary in the obstacle problem close to singularities}

From \cite{Schaeffer_examples_of_singularities77} we know that the behavior may for $C^\infty$-coefficients
$\coeff$ be quite complicated, including the possibility of infinitely many cusp domains
accumulating at a singular point.
In the present paper the authors give a new description of the 
{\em regular set close to singularities}.
Let us state here the precise result of the present article.
\begin{figure}[h!]
	\centering	
	\frame{
		\psset{xunit=1cm,yunit=1cm}
		\begin{pspicture}		
		(-5,-3)(8,3)	
		\rput[0,1.5](0,0){{$x^0$}}
		\pscurve
		(-5,-1.5)(-4,-0.9)(-3,-0.5)(-1.5,-0.19)(0,0)
		\pscurve
		(0,0)(-1.5,0.19)(-3,0.5)(-4,0.9)(-5,1.5)
		\psellipse(1.2,0)(0.5,0.08)
		\psellipse(3,0)(0.6,0.18)
		\pscurve
		(4.5,0)(4.8,-0.04)(5.4,-0.24)(5.99,-0.3)(6.3,0)(5.99,0.3)(5.4,0.24)(4.8,0.04)(4.5,0)
		\psdot(0,0.0)
		
		\psdot(4.5,0)
		\rput[-12.5,1.5](0,0){{$x^3$}}
		
		\psdot(3,0.18)
		\rput[-8,-1.3](0,0){{$x^2$}}
		
		\psdot(2.42,0)
		\rput[-6.2,1.7](0,0){{$x^1$}}
		
		\end{pspicture}
	}  
	\caption{Local structure of the free boundary close to singularities whose blow-down limit has a one dimensional coincidence set.}
	\label{fig:local_structure_of_coincidence_sets}
\end{figure}
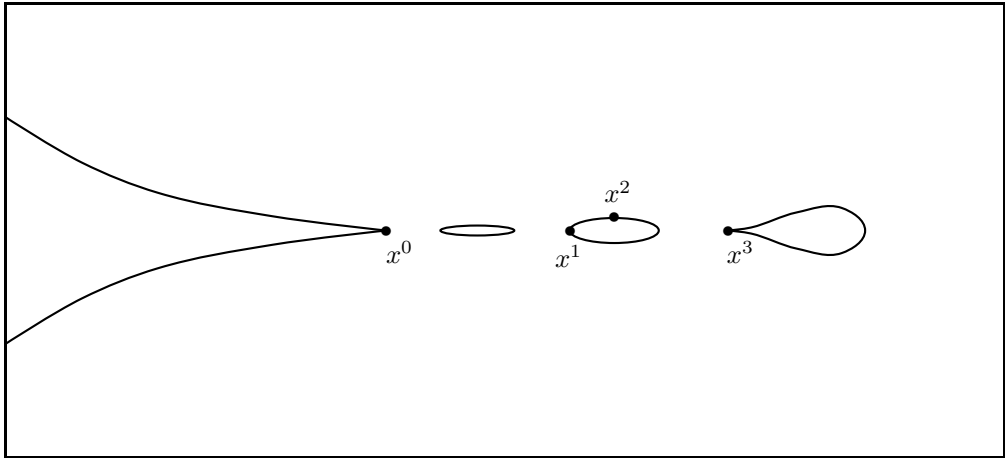

\begin{thmxx}\label{thm:MainTheoremII}
	\mbox{} \\
	Let $u$ be a solution of the obstacle problem \eqref{eq:obstacle-local} in an open set $\Omega \subset \R^N$, and let $x^0 \in \partial \set {u>0} \cap \Omega$ be an order $n$ singular point of the free boundary (cf. Definition \ref{solandcod} \eqref{item:definition_order_1_singularity}) where $n \geq 1$, such that
	\begin{align}
	\frac{u(rx+x^0)}{r^2} \to p(x) \quad \text{ in } C_{\operatorname{loc}}^{1,\alpha}(\R^N) \text{ as } r \to 0,
	\end{align}  
	where
	\begin{align} 
	p(x) \geq c_{p} \abs{x}^2 \quad \text{ for some } c_{p}>0 \text{ and for all } x \in \R^{N-n} \times \set {0}^n.
	\end{align} 
	Let $E'\subset \R^{N-n}$ be the unique ellipsoid of diameter $1$ in Proposition \ref{thm:MainTheorem_Intro_I} with respect to the polynomial $p'(x') := p(x',0)$ for all $x' \in \R^{N-n}$.
	Then there exists $\delta >0$ 
	such that for
	any free boundary point $x \in \partial \set {u>0} \cap B_\delta(x^0)$, setting for $\R^N \ni x = (x',x'') \in \R^{N-n} \times \R^n$
	\begin{align}
	& d(x'') := \operatorname{diam}\bra { \set {u(\cdot,x'')=0} \cap B'_{2\delta}((x^0)')},
	\end{align}
	it holds that:
	\begin{enumerate}[(i)]
		\item \label{item:first_part_of_theorem_singularities}
		if $d(x'')>0$ there is $t': \R^n \to \R^{N-n}$, $t'(x'') \to 0$ as $x'' \to (x^0)''$ and $\omega(s)\to 0$ as $s \to 0$ such that
		either 
		\begin{enumerate}
			\item \label{item:first_part_of_theorem_singularities_a}
			$\{ y\in B_{2d(x'')}(x): u(y)=0\}$ is (in $C^2$) $\omega(|x-x^0|)d(x'')$-close to
			\begin{align}
			(t'(x'') + {d(x'')}E') \times B''_{2d(x'')}
			\end{align}
			or
			\item  \label{item:first_part_of_theorem_singularities_b}
			$\{ y \in B_{2 d(x'')}(x) :~ u(y) =0 ~,~ (x-y)'' \cdot \nu''(x) =0 \} $ 
			is in the hyperplane $\{y \in \R^N :  (x-y)'' \cdot \nu''(x) =0  \}$  (in $C^2$) $\omega(|x-x^0|)  d(x'')$-close to 
			\begin{align}
			t'(x'') + { \set{ y \in B_{2d(x'')}(x): y' \in \sqrt{ \max \{1+c( y-x)'' \cdot \nu''(x), 0 \}   } ~ d(x'')  E'    }    },
			\end{align}
			where {$c>0$,} $\nu'': \partial \set {u=0 } \to \partial B_1'' \subset \R^n$,
			\begin{align} \label{eq:def_of_nu2}
			&\nu''(x) :=  \frac{  \int_{ \{u=0\} \cap B_{d(x'')}(x)  } (x-y)'' \dx{y} }{ \abs{\int_{ \{u=0\} \cap B_{d(x'')}(x)  } (x-y)'' \dx{y}}   }
			\end{align}
			and
			\begin{align}
			&\operatorname*{osc} \limits_{ y \in B_{d(x'')}(x) \cap \{u=0 \}} \nu''(y) \to 0 ~ \text{ as } x \to x^0. 	
			\end{align}
		\end{enumerate}
		\item  \label{item:second_part_of_theorem_singularities}
		if $d( x''  )=0$ then setting 
		\begin{align} I_\delta := \{  y'' \in \R^n : \set {u(\cdot , y'') =0} \cap B'_{2\delta}((x^0)') \neq \emptyset   \},
		\end{align}
		either 
		\begin{enumerate}
			\item \label{item:second_part_of_theorem_singularities_a}
			$x$ is a singular free boundary point
			and
			\begin{align}
			\lim_{ \substack{y'' \in I_\delta \\  y''  \to x'' }  } \frac{d( y'' )-d( x''  )}{  \abs{y''-x''}    } =0,
			\end{align}
			or
			\item \label{item:second_part_of_theorem_singularities_b}
			$x$ is a regular free boundary point and
			\begin{align}
			\lim_{ \substack{y'' \in I_\delta \\  y'' \cdot \nu''(x) \to x'' \cdot \nu''(x'') }  } \frac{d( y'' )-d( x''  )}{\sqrt{| y'' \cdot \nu''(x) - x'' \cdot \nu''(x)|}} \in \R,
			\end{align}
			where $\nu''$ is as in \eqref{eq:def_of_nu2} and 
			\begin{align}
			&\operatorname*{osc} \limits_{ y \in B_{d(x'')}(x) \cap \{u=0 \}} \nu''(y) \to 0 ~ \text{ as } x \to x^0. 	
			\end{align}
		\end{enumerate}
	\end{enumerate}
\end{thmxx}

\begin{rem}[$d(x'')$ in the Main Theorem]
Note that since $x^0$ is an order-$n$ singular point for every $\epsilon >0$ there is $\delta >0$ such that $d(x'')< \epsilon$ for all $x \in B_\delta(x^0)$.
\end{rem}
\begin{rem}[The Main Theorem in a picture]
In Figure \ref{fig:local_structure_of_coincidence_sets} the case \eqref{item:first_part_of_theorem_singularities} (\eqref{item:first_part_of_theorem_singularities_a} and \eqref{item:second_part_of_theorem_singularities_b}) of the Main Theorem is represented by $x^2$, the case \eqref{item:second_part_of_theorem_singularities_a} is represented by $x^3$ and \eqref{item:second_part_of_theorem_singularities_b} is depicted in $x^1$.
\end{rem}

\begin{rem}
By a $C^2$-surface we mean a set that can locally be represented by a $C^2$-graph. By `close in $C^2$' we mean that locally the $C^2$-surfaces are -- as $C^2$-graphs -- close in $C^2$-norm.
\end{rem}

We conjecture that for $C^\infty$-coefficients $\coeff$ paraboloids do occur as coincidence sets of blow-up limits with moving centers approaching singularities.

\section{Notation and known results}\label{sec:notation}

We shall now  clarify the notation used in the introduction. 
\\
Throughout this work $\R^N$ will be equipped with the Euclidean inner product $x \cdot y$ and the induced norm $\abs{x}$. Due to the nature of the problem we will often write $x \in \R^N$ as $x=(x', x'') \in \R^{N-n} \times \R^n$ for an $n \in \N$, $N \geq n+1$. $B_r(x)$ will be the open $N$-dimensional ball of center $x$ and radius $r$. $B_r'(x')$ will be the open $(N-n)$- dimensional ball of center $x' \in \R^{N-n}$ and radius $r$ and $B''_r(x'')$ will be the open $n$-dimensional ball of center $x'' \in \R^n$ and radius $r$. Whenever the center is omitted it is assumed to be $0$.
\\
When considering a set $A$, $\chi_A$ shall denote the characteristic function of $A$.
\\
Finally we call special solutions of the form
$\max(x\cdot e,0)^2/2$ half-space solutions; here $e\in \partial B_1$ is a fixed vector.\\
When $M \in \R^{N \times N}$ is a matrix, we mean by $\tr(M):= \sum_{j=1}^N M_{jj}$ its trace. 
\\ The following proposition is a consequence of \cite{EFW_Complete_classification} (or for some cases \cite{EberleShahgholianWeiss_global_solutions_and_local_analysis_2020}).

\begin{prop}\label{thm:MainTheorem_Intro_I}
	Let  $u$ be a   solution of  $\Delta u = \chi_{\{u>0\}}, u \geq 0$ in $\R^N$
	such that the coincidence set $\{ u=0\}$ has non-empty interior
	and let
	\begin{align}
	p(x) := \lim \limits_{\rho \to \infty} \frac{u(\rho x)}{\rho^2}\quad \text{ in } L^\infty(\partial B_1) \quad \text{and} \quad \cN(p) := \{p=0\}.
	\end{align}
	Setting $n:= \dim \cN(p)$, then
	\\
	the coincidence set is an ellipsoid,
	a paraboloid or a cylinder with an
	ellipsoid or a paraboloid as base. \\ Furthermore the cross sections of the  paraboloid are given as the (up to scaling and translation) \emph{unique} ellipsoids that are given as the coincidence set $\{v'=0\}$ of  a solution of the ($N-n$)-dimensional obstacle problem with \emph{the same blow-down}, i.e. $v': \R^{N-n} \to [0,\infty)$ solves
	\begin{align}
	\Delta v' &= \chi_{\set {v'>0}} ~&&\text{ in  } \R^{N-n} \quad \text{ and } \\
	\lim \limits_{\rho \to \infty} \frac{v'(\rho x')}{\rho^2} &= p(x')=\lim \limits_{\rho \to \infty} \frac{u(\rho x)}{\rho^2}~&&\text{ in } L^\infty(\partial B_1) .
	\end{align}	
\end{prop}

\begin{defn}[Coincidence set]\label{def:coincidence_set}
	\mbox{} \\
	For solutions $u$ of the obstacle problem \eqref{eq:obstacle-local}, we define the \emph{coincidence set} $\cC$ to be
	\begin{align}
	\cC := \set {u=0}.
	\end{align}
\end{defn}

\begin{rem} 	It is known that the coincidence  $\cC$ of a \emph{global} solution of the obstacle problem with constant coefficients is \emph{convex} (see e.g. \cite[Theorem 5.1]{PetrosyanShahgholianUraltseva_book}).
\end{rem}

\begin{defn}[Ellipsoids and Paraboloids] \label{def:ellipsoid_and_paraboloid}
	\mbox{} \\
	We call a set $E \subset \R^N$ \emph{ellipsoid} if after translation and rotation
	\begin{align}
	E = \set {x \in \R^N : \sum \limits_{j = 1}^N \frac{x_j^2}{a_j^2} \leq 1   }
	\end{align}
	for some $a \in (0,\infty)^N$.  We call a set $P \subset \R^N$ a \emph{paraboloid}, 
	if after translation and rotation  $P $ can be represented as 
	$$
	P = \set { (x',x_N) \in \R^N : x' \in \sqrt{x_N} E'   } ,
	$$	  
	where  $E' $  is an $(N-1)$-dimensional ellipsoid.
\end{defn}

\begin{defn}[Local solutions and singular points] \label{solandcod}
	\mbox{ }\\
	\begin{enumerate}[(i)] 
		\item We define a local solution of the obstacle problem (with Lipschitz-coefficients), to be a non-negative function $u$ solving 
		\begin{equation} 
		\Delta  u = \coeff(x)  \chi_{\set {u>0}} \quad \text{ in } \Omega \qquad , \qquad \coeff \in \operatorname{Lip}(\Omega, [\coeff_0, \infty)), \coeff_0 >0. 
		\end{equation}

		\item  \label{item:definition_order_1_singularity} For a local solution $u$ of the obstacle problem (with Lipschitz-coefficients), we call  
		$x^0 \in \partial \{u>0\} \cap B_1$ an order-$n$ singular point\footnote{The order here refers to the dimension of the zero set of the blow-up polynomial.}
		if 
		\begin{align} \label{eq:type_of_singularity_in_singularities_theorem}
		\frac{u(rx+x^0)}{r^2} \to p(x) \quad \text{ in } C_{\operatorname{loc}}^{1,\alpha}(\R^N) \text{ as } r \to 0
		\end{align}  
		where
		\begin{align}\label{pcond}
		p(x) &= x^T A x ~ \text{ for some  } A \in \R^{N \times N} \text{ with } \operatorname{dim} \operatorname{ker}(A) =n \text{ and } \\
		p(x) &\geq c_p \abs{x}^2 ~ \text{ for some } c_{p}>0 \text{ and all } x \in \operatorname{ker}(A)^\perp. 
		\end{align} 
		We define $\cN(p) := \operatorname{ker}(A)$ and $\Sigma_n$ to be the set of all order-$n$ singular points.  
	\end{enumerate}
\end{defn}

\begin{rem}
Note that \emph{any} singular point of a solution of the obstacle problem (with Lipschitz-coefficients) is an order-$n$ singular point for some $n \in \N$.
\end{rem}

\section*{Acknowledgment}
The authors thank the anonymous referee for his valuable suggestions on the improvement of the presentation of this work.

\section{Main part}


In the following lemma we show that \emph{every} limit of a blow-up \emph{with moving center} towards a singularity $x^0$ has the \emph{same blow-down, i.e. the blow-up of $u$ at $x^0$}.

\begin{lem}\label{blow-down-blow-up}
	Let $(x^k)_{k \in \N} \subset \partial \set {u>0}$ be a sequence of
	free boundary points approaching a singular free boundary point $x^0 \in \Sigma_n$ (cf. Definition \ref{solandcod} \eqref{item:definition_order_1_singularity}).
	
	Moreover let $u_0$ be a blow-up limit of $u$, i.e.
	suppose that $x^k \to x^0$ and $r_k\to 0$ as $k\to\infty$
	and that
	$$ \frac{u(x^k+r_k \cdot)}{{r_k}^2}\to u_0 \quad \text{ in } C^{1,\alpha}_{\operatorname{loc}}(\R^N).$$
	
	Then either $u_0$ is a half-space solution, or the unique blow-down limit 
	\begin{equation}\label{blow-down-v}
	v(x) : = \lim_{\rho \to +\infty}\frac{u_0(\rho x)}{\rho^2}
	\end{equation}
	is the
	polynomial $p$ of   \eqref{eq:type_of_singularity_in_singularities_theorem}, i.e. the blow-up of $u$ at $x^0$. 
	\\
	If we assume in addition that $|\{ u_0=0\}|=0$, then
	$u_0= p $.
\end{lem}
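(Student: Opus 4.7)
The plan is to combine the classification of $2$-homogeneous global solutions of the obstacle problem, Weiss's and Monneau's monotonicity formulas applied to the blow-up $u_0$, and a comparison between the blow-ups of $u$ centered at $x^k$ and at $x^0$ through the intermediate scale $s_k:=|x^k-x^0|$.

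First I would verify that $u_0$ is a non-negative global solution of the constant-coefficient obstacle problem $\Delta u_0=\chi_{\{u_0>0\}}$ (after normalizing $\coeff(x^0)=1$), satisfying $u_0(0)=0$ and $\nabla u_0(0)=0$ by the $C^{1,1}$-regularity of solutions. Weiss's monotonicity formula applied to $u_0$ at the origin then ensures that every subsequential blow-down limit is a $2$-homogeneous global solution, and by Caffarelli's classification such a limit is either a half-space solution or a non-negative quadratic polynomial $y\mapsto y^{T}\tilde{A}y$ with $\tilde{A}\geq 0$ and $\tr\tilde{A}=1/2$.

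For the polynomial alternative I would first apply Monneau's monotonicity formula to $u_0$ at the origin with the polynomial blow-down $v$: the monotone quantity tends to zero at infinity, hence vanishes identically, giving $u_0=v$. To identify $v$ with $p$, I would consider the auxiliary rescaling
\begin{equation}
v_k(y):=\frac{u(x^k+s_k y)}{s_k^{2}}.
\end{equation}
Because $\hat{s}:=\lim_{k\to\infty}(x^k-x^0)/s_k$ lies in $\cN(p)$, we have $p(\hat{s}+y)=p(y)$, and the blow-up of $u$ at $x^0$ yields $v_k\to p$ in $C^{1,\alpha}_{\loc}(\R^{N})$. Writing $u_k(y):=u(x^k+r_k y)/r_k^{2}=(s_k/r_k)^{2}v_k((r_k/s_k)y)$, I would pass to the limit to conclude $u_0=p$, which together with $u_0=v$ gives $v=p$.

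For the half-space alternative, the direct computation $W_{\mathrm{hs}}<W_{\mathrm{pol}}$ for the Weiss energies, together with Weiss monotonicity and the classification of blow-ups of $u_0$ at $0$, forces $W(r,u_0,0)\equiv W_{\mathrm{hs}}$. The equality case of Weiss's formula then makes $u_0$ $2$-homogeneous, so $u_0$ equals its blow-down $v$, which is a half-space solution. This also excludes mixed subsequential limits, establishing uniqueness in every case. Finally, under $|\{u_0=0\}|=0$ the equation reduces to $\Delta u_0=1$ on all of $\R^{N}$; a Liouville-type argument on $u_0-|x|^{2}/(2N)$ combined with $u_0\geq 0$ and quadratic growth identifies $u_0$ as a non-negative $2$-homogeneous quadratic polynomial, and since half-space solutions have infinite coincidence set this case is excluded, so $u_0=v=p$.

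The principal obstacle is passing to the limit in $u_k(y)=(s_k/r_k)^{2}v_k((r_k/s_k)y)$ when $s_k/r_k\to\infty$: the divergent prefactor must be cancelled by the vanishing of $v_k$ to second order at the origin. This requires either a quantitative expansion of $v_k$ near $0$ obtained from Monneau's formula at $x^0$, or a diagonal-sequence argument choosing $\rho_j\to\infty$ and $k(j)\to\infty$ with $r_{k(j)}\rho_j\to 0$ and $(x^{k(j)}-x^0)/(r_{k(j)}\rho_j)\to 0$, after which the diagonal blow-up converges to the blow-up of $u$ at $x^0$, hence to $p$.
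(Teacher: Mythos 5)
The central step in your polynomial alternative --- applying Monneau's formula at the origin to conclude $u_0 = v$ --- fails, and the conclusion it targets is false in exactly the cases where the lemma is later applied. Monneau's formula is monotone only when the base point is a \emph{singular} free boundary point, i.e.\ when $W(0^+,u_0,0)$ equals the polynomial Weiss energy. For the blow-up limits $u_0$ arising here the origin is in general a \emph{regular} free boundary point of $u_0$ (for instance in Proposition~\ref{lem:existence_of_non_trivial_scaling} the limit $u_0$ has coincidence set a paraboloid or a cylinder with ellipsoidal base, with smooth boundary), so Monneau monotonicity is unavailable and $u_0$ need \emph{not} coincide with its blow-down. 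The lemma asserts only that the blow-down $v$ equals $p$; your route $u_0 = v = p$ overshoots and would contradict Proposition~\ref{lem:existence_of_non_trivial_scaling}. Likewise, passing $u_k(y)=(s_k/r_k)^2 v_k((r_k/s_k)y)$ to a limit aiming at $u_0=p$ cannot survive $s_k/r_k\to\infty$; you flag this yourself, and the proposed fixes remain sketches.

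What is actually needed is a comparison of the \emph{blow-down} of $u_0$ with $p$ that bypasses any identification of $u_0$ with $v$. The paper does this via the ACF functional $\phi(\partial_e\,\cdot)$: almost-monotonicity across scales $r_k\rho\le\tilde r_0$ and the translation $x^k\to x^0$ yield $\phi(\partial_e u_0,\rho,0)\le\phi(\partial_e p,1,0)$ for every $\rho>0$ and $e\in\partial B_1$; sending $\rho\to\infty$ gives the same inequality for the blow-down $q=v$, i.e.\ $|Qe|\le|Ae|$ for all $e$ with $\tr Q=\tr A=\tfrac12$, whence $Q=A$ by \cite[Lemma~14]{Caffarelli-revisited}. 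Your diagonalization idea in the last paragraph could in principle identify $v$ with $p$ directly, but you would need a compactness argument producing a single diagonal sequence that realizes simultaneously the blow-down of $u_0$ and the blow-up of $u$ at $x^0$, which is not supplied. Your Weiss-energy treatment of the half-space alternative and the Liouville argument in the $|\{u_0=0\}|=0$ case are both fine and consistent with the paper.
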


\begin{proof}
	We may assume that the coefficients $\coeff$ are normalized such that $\coeff(x^0)=1$. It is known (cf. \cite[proof of Proposition 3.17 (iii)]{PetrosyanShahgholianUraltseva_book}) that $u_0$ is a global solution of the obstacle problem, i.e.
	\begin{align}\label{eq:obstacle_problem_solved_by_blow-up}
	\Delta u_0 = c(x^0) \chi_{\set {u_0>0}  } = \chi_{\set {u_0>0}  } \quad , \quad u_0 \geq 0 \quad \text{ in } \R^N.
	\end{align}
	It is furthermore known (see  proof of Theorem II, Case 2, and 3 in \cite{CaffarelliKarpShahgolian_Annals_2000}) 
	that every blow-down limit of any global solution of the obstacle problem \eqref{eq:obstacle_problem_solved_by_blow-up} is either a  half-space solution or a homogeneous polynomial $q$ of degree $2$ satisfying $\Delta q \equiv 1$. 
	Moreover it is known (see  proof of Theorem II, Case 2 in \cite{CaffarelliKarpShahgolian_Annals_2000}) that if  the blow-down of any solution is a half-space solution then 
	the solution itself has to be a half-space solution.
	We thus  conclude that either
	$u_0$ is a half-space solution, or the blow-down $v$ in \eqref{blow-down-v}
	is a homogeneous polynomial  $q$ of degree $2$ satisfying $\Delta q \equiv 1$.

	Define now  $\phi(h, r, y)$ to be the ACF-functional 
	\begin{align}
	\phi(h, r, y) := \frac{1}{r^4} \int \limits_{B_r(y)} \frac{\abs {\nabla h^+}^2}{\abs {x}^{N-2} }  \dx{x} \int \limits_{B_r(y)} \frac{\abs {\nabla h^-}^2}{\abs {x}^{N-2} }  \dx{x},
	\end{align}
	which is `almost non-decreasing' in $r$, see \cite[Theorem 1.6]{CaffarelliJerisonKenig_new_monotonicity_formula_2002}.
	We infer from \cite[Theorem 1.6]{CaffarelliJerisonKenig_new_monotonicity_formula_2002} that there is $C< \infty$ such that for each $e \in \partial B_1$, $\rho >0$ and $\epsilon >0$, choosing first $\tilde{r}_0$ sufficiently small and then $k$ sufficiently large,
	\begin{align}
	\phi(\partial_e u_0, \rho, 0) &{\leq} \epsilon + \phi \bra { \partial_e \frac{u(r_k x +x^k)}{r_k^2}, \rho,0   } 
	= \epsilon + \phi \bra { \partial_e u , r_k \rho, x^k    }\\
	&\leq  \epsilon + (1+ \tilde{r}_0) \phi \bra {\partial_e u , \tilde{r}_0, x^k} + C \tilde{r}_0
	\leq 2 \epsilon + (1+ \tilde{r}_0)\phi \bra { \partial_e u, \tilde{r}_0, x^0  } + C \tilde{r}_0 \\ &\leq 3 \epsilon + \phi \bra {\partial_e p,1,0}.
	\end{align}
	Hence for every $\rho >0$ 
	\begin{align}\label{spec1}
	\phi \bra {\partial_e u_0, \rho, 0} \leq \phi \bra {\partial_e p ,1,0},
	\end{align}
	and passing to the limit $\rho \to \infty$ we get that
	\begin{align}
	\phi \bra {\partial_e p , 1,0   } \geq \phi \bra {\partial_e u_0, \rho,0} = \phi \bra {\partial_e \frac{u_0(\rho \cdot)}{\rho^2},1,0} \to \phi \bra {\partial_e v,1,0 }
	\end{align} as $\rho \to \infty$.
	We obtain that for all $e \in \partial B_1$
	\begin{align} \label{eq:ACF_comparison_between_blowup_and_blowdown}
	\phi \bra {\partial_e p , 1,0   } \geq \phi \bra {\partial_e q,1,0 }.
	\end{align} 
	Let us now express $p$ and $q$ as
	\begin{align}
	p(x) = x^T A x \quad \text{ and } \quad q(x) = x^T Q x,
	\end{align}
	where $A$ and $Q \in \R^{N \times N}$ are symmetric positive semidefinite such that $\operatorname{tr}(A)= \operatorname{tr}(Q) = \frac{1}{2}$.
	From \eqref{eq:ACF_comparison_between_blowup_and_blowdown} we conclude that for every $e \in \partial B_1$ 
	\begin{align} \label{eq:comparison_of_matrices_in_directions}
	\abs{Q e}^2 \leq \abs{A e}^2.
	\end{align}
	Using \cite[Lemma 14]{Caffarelli-revisited} we obtain that $A=Q$. Hence $q \equiv p$. \\
	In the special case $|\{ u_0=0\}|=0$,
	we infer from the equation $\Delta u_0\equiv 1$ and the quadratic growth of $u_0$
	---using Liouville's theorem--- that it is a quadratic polynomial.
	The fact that $u_0(0)=|\nabla u_0(0)|=0$ and the asymptotics of $u_0$ imply
	that $u_0\equiv p$.
\end{proof}

\begin{prop} \label{lem:existence_of_non_trivial_scaling}
	Let $n \geq 1$ and let $u$ be a solution of \eqref{eq:obstacle-local} and let $(x^k)_{k \in \N} \subset \partial \set {u>0}$ be a sequence of \emph{regular} free boundary points approaching a singular free boundary point $x^0 \in \Sigma_n$.
	Then there is a sequence of rescalings $(r_k)_{k \in \N} \subset (0,\infty)$, $r_k \to 0$ as $k \to \infty$ such that 
	\begin{align}
	u_k(x) := \frac{u(r_k x + x^k)}{r_k^2} \to u_0 \quad \text{ in } C^{1,\alpha}_{\operatorname{loc}}(\R^N) \text{ as } k \to \infty,
	\end{align}
	where $\set {u_0=0}$ is \emph{either a cylinder {over} an ($N-n+1$)-dimensional paraboloid or a cylinder with an ($N-n$)-dimensional ellipsoid as base, and the cylindrical directions are in $\cN(p)$}.
\end{prop}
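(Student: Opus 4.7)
The plan is to identify an intermediate blow-up scale $r_k$ at each regular free boundary point $x^k$ so that the rescaling $u_k(x) := u(x^k + r_k x)/r_k^2$ admits a subsequential $C^{1,\alpha}_{\operatorname{loc}}$-limit $u_0$ which is neither a half-space solution nor the polynomial $p$ of Definition~\ref{solandcod}. Granted such a limit, Lemma~\ref{blow-down-blow-up} identifies the unique blow-down of $u_0$ as $p$, and the hypothesis $N-n+1 \geq \Lambda^*$ together with Definition~\ref{thm:MainTheorem_Intro_I} forces $\set{u_0 = 0}$ to be an ellipsoid, a paraboloid, or a cylinder with an $(N-n)$-dimensional ellipsoid or $(N-n+1)$-dimensional paraboloid as base. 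Since $\dim \cN(p) = n \geq 1$, the bounded ellipsoid case is ruled out, and the cylindrical directions must annihilate $p$ and hence lie in $\cN(p)$; the dimension counts ($n$ cylindrical directions over an $(N-n)$-dimensional ellipsoid, $n-1$ over an $(N-n+1)$-dimensional paraboloid) then match exactly, yielding the claimed structure.

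To select $r_k$, I would use Weiss' monotonicity formula
\begin{equation*}
W(r, u, y) := r^{-N-2} \int_{B_r(y)} \bra{ \tfrac{1}{2} \abs{\nabla u}^2 + u } \dx{x} - 2\, r^{-N-3} \int_{\partial B_r(y)} u^2 \dH{N-1},
\end{equation*}
which is almost non-decreasing in $r$ for Lipschitz coefficients (cf.\ \cite{PetrosyanShahgholianUraltseva_book}). Let $W_{\mathrm{reg}}$ denote its value on a half-space solution and $W(p)$ its value on the polynomial $p$; the strict inequality $W_{\mathrm{reg}} < W(p)$ follows by a direct computation. Fix $\theta \in (W_{\mathrm{reg}}, W(p))$ and set
\begin{equation*}
r_k := \sup \set { r > 0 : W(r, u, x^k) \leq \theta }.
\end{equation*}
Regularity of $x^k$ gives $W(r, u, x^k) \to W_{\mathrm{reg}}$ as $r \to 0$, so $r_k > 0$, and continuity of $r \mapsto W(r, u, x^k)$ yields $W(r_k, u, x^k) = \theta$. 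The convergence $x^k \to x^0$ together with $W(r, u, x^0) \to W(p) > \theta$ as $r \to 0$ and the almost monotonicity of $W$ forces $r_k \to 0$.

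Standard obstacle-problem compactness (quadratic growth, non-degeneracy, $W^{2,p}$ estimates) now produces a subsequential $C^{1,\alpha}_{\operatorname{loc}}$-limit $u_0$, which is a global solution of $\Delta u_0 = \chi_{\set{u_0>0}}$ (the Lipschitz coefficient tending to $\coeff(x^0)$, normalized to $1$). Continuity of $W(1, \cdot, 0)$ under $C^{1,\alpha}_{\operatorname{loc}}$-convergence yields $W(1, u_0, 0) = \theta$, excluding both $u_0 = \tfrac{1}{2}\max(e \cdot x, 0)^2$ (energy $W_{\mathrm{reg}}$) and $u_0 = p$ (energy $W(p)$). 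The structural conclusions of the proposition then follow from Lemma~\ref{blow-down-blow-up} and Definition~\ref{thm:MainTheorem_Intro_I} as sketched in the first paragraph. The main technical delicacy is the verification that $r_k \to 0$, which combines the almost monotonicity of $W$ for Lipschitz coefficients with a contradiction argument against the limit $W(p) > \theta$ of $W(\cdot, u, x^0)$ near $0$; the remaining ingredients are routine applications of the quoted compactness and monotonicity results.
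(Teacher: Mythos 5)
Your strategy---selecting $r_k$ by an intermediate--value argument applied to the Weiss energy $W(\cdot,u,x^k)$---is a genuinely different route from the paper's. The paper instead chooses $r_k$ so that $\abs{\set{u_{r_k,x^k}=0}\cap B_1}=\tfrac14\abs{B_1}$, using the intermediate--value theorem on the (continuous but non-monotone) Lebesgue measure of the rescaled coincidence set, together with the explicit upper bound $r_k<\overline r_k<2\abs{x^k-x^0}\to 0$. Both select a scale strictly between the half-space regime and the polynomial regime. Your version trades the paper's explicit cap on $r_k$ for a monotonicity-based contradiction, which is elegant, but as written it has two gaps.

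The first gap is in the proof that $r_k\to 0$. With Lipschitz coefficients the Weiss quantity is only \emph{almost} monotone, i.e.\ $W(r,u,y)+Cr$ is non-decreasing on $(0,R_0)$ for a suitable $C$. Thus even once $W(\rho,u,x^k)>\theta$ for some small $\rho$, you cannot conclude $W(r,u,x^k)>\theta$ for all $r\geq\rho$; the drift permits a later dip back below $\theta$, so the unrestricted $\sup\set{r>0: W(r,u,x^k)\leq\theta}$ need not tend to $0$. Your compactness contradiction ``against the limit $W(p)>\theta$ near $0$'' only rules out subsequential limits $\overline r$ with $C\overline r<W(p)-\theta$. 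The fix is to cap the supremum at a fixed $\rho_0$ with $C\rho_0<W(p)-\theta$ (or to define $r_k$ as the \emph{first} crossing $\inf\set{r>0:W(r,u,x^k)>\theta}$); then a subsequential limit $\overline r\in(0,\rho_0]$ gives $W(\overline r,u,x^0)\leq\theta$ while almost monotonicity forces $W(\overline r,u,x^0)\geq W(p)-C\overline r>\theta$, a contradiction.

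The second gap is in the passage from $u_0\neq$ half-space and $u_0\neq p$ to the stated cylinder structure. Definition \ref{thm:MainTheorem_Intro_I} applies only once $\set{u_0=0}$ is known to have non-empty interior; this does follow from $W(1,u_0,0)=\theta\neq W(p)$ via the last clause of Lemma \ref{blow-down-blow-up} (if $\abs{\set{u_0=0}}=0$ then $u_0\equiv p$) together with convexity of the coincidence set of a global solution, but you skip it. More substantively, Definition \ref{thm:MainTheorem_Intro_I} only lists the possible \emph{types} (ellipsoid, paraboloid, cylinder with such a base); it does not by itself tell you the number of cylindrical directions nor that they lie in $\cN(p)$. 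The paper obtains this from the Caffarelli--Karp--Shahgholian ACF directional-monotonicity argument, which shows $u_0$ is monotone in every direction of $\cN(p)$ (hence $\set{u_0=0}$ is unbounded in those directions), combined with non-degeneracy of $p$ on $\cN(p)^\perp$ (boundedness in those directions). Your assertion ``since $\dim\cN(p)=n\geq1$, the bounded ellipsoid case is ruled out'' and the subsequent dimension count are exactly what this ACF step proves; without it, a bounded ellipsoid, or a cylinder with the wrong number of cylindrical directions, is not excluded. With these two points supplied, your Weiss-based selection is a clean and natural alternative to the paper's measure-cut construction.
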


\begin{proof}
	We may assume that the coefficients $\coeff$ are normalized such that $\coeff(x^0)= 1$.\\ 
	\textbf{Step 1.} \emph{Construction of the rescaling $r_k$.}\\
	For any $\epsilon >0$ we obtain from Definition \ref{solandcod} \eqref{item:definition_order_1_singularity} that there is $r_0(\epsilon) >0$ such that for all $0<r< r_0(\epsilon)$
	\begin{align}
	\abs { \frac{u(r x+x^0)}{r^2} -p(x)      } < \epsilon \quad \text{ for all } x \in B_2.
	\end{align}
	Let us from now on confine ourselves to $k>K(\epsilon)$, where $K(\epsilon)$ is such that
	\begin{align}
	\abs{x^k-x^0} < r_0(\epsilon) \quad \text{ for all } k >K(\epsilon).
	\end{align}
	Let us furthermore for each $k>K(\epsilon)$ choose $\overline{r}_k >0$ such that
	\begin{align}
	\abs{x^k-x^0}< \overline{r}_k < \min \set {2 \abs{x^k-x^0},  r_0(\epsilon) }
	\end{align}
	implying that for all $k > K(\epsilon)$
	\begin{align}
	\abs {\frac{x^k-x^0}{\overline{r}_k}} <1.
	\end{align}
	This choice of $\overline{r}_k$ implies that for
	\begin{align}
	u_{\overline{r}_k,x^k}(x):= \frac{u \bra {\overline{r}_k x +x^k}}{\overline{r}_k^2} 
	\end{align}
	it holds that
	\begin{align}
	\abs{   \set {u_{\overline{r}_k,x^k} =0} \cap B_1} \leq \abs{ B_{\sqrt{\frac{\epsilon}{c_p}}} (\cN(p))   \cap B_1}. 
	\end{align}
	Now using that $x^k$ is a \emph{regular} free boundary point there is a half-space solution $H^k(x)$ and a scaling $0< \underline{r}_k < \overline{r}_k$ such that
	\begin{align}
	\abs { \frac{u(\underline{r}_k x + x^k)}{\underline{r}_k^2} -H^k(x)      } < \epsilon \quad \text{ for all  } x \in B_1.
	\end{align}
	The non-degeneracy Lemma (cf. \cite[proof of Lemma 3.1]{PetrosyanShahgholianUraltseva_book}) implies that
	\begin{align}
	u_{\underline{r}_k,x^k}(x):= \frac{u(\underline{r}_k x + x^k)}{\underline{r}_k^2}  = 0 \quad \text{ in } B_1 \setminus \set {B_{\sqrt{\frac{2N\epsilon}{c_0}}}\bra {\set {H^k>0}}}.
	\end{align}
	So for $\epsilon>0$ small enough,
	\begin{align}
	\abs { \set {u_{\underline{r}_k,x^k} =0} \cap B_1    } > \frac{1}{4} \abs{B_1} > \abs { \set {u_{\overline{r}_k,x^k}=0} \cap B_1    }. 
	\end{align}
	Setting
	\begin{align}
	u_{r, x^k}(x):= \frac{u(r x + x^k)}{r^2},
	\end{align}
	we conclude that $\abs { \set {u_{r,x^k} =0} \cap B_1    } $ is continuous {in $r$}:
	\begin{align} \label{eq:local_continuity_of_coincidence_set_in_measure}
	\begin{split}
	\abs { \set {u_{r,x^k} =0} \cap B_1    } &= \abs {B_1} - \int \limits_{B_1} \chi_{\set {u_{r,x^k} >0}} = \abs {B_1} - \int \limits_{B_1} \frac{  \Delta u_{r,x^k}(x) }{c(x^k + rx)}  \dx{x}   \\
	&\to  \abs {B_1} - \int \limits_{B_1} \frac{  \Delta u_{\tilde{r},x^k}(x) }{c(x^k + \tilde{r}x)}  \dx{x}   = \abs {B_1} - \int \limits_{B_1} \chi_{\set {u_{\tilde{r},x^k} >0}} \\ &= \abs { \set {u_{\tilde{r},x^k} =0} \cap B_1    } \quad \text{ as } r \to \tilde{r}.
	\end{split}
	\end{align}
	Note that here we used the strong $W^{2,p}_{\loc}$-convergence (cf. \cite[proof of Proposition 3.17 (v)]{PetrosyanShahgholianUraltseva_book}).
	Consequently,
	there is $ r_k \in \bra{\underline{r}_k , \overline{r}_k}$ such that
	\begin{align}\label{eq:measure_of_coincidence_set_is_that_of_quater_ball}
	\abs { \set {u_{r_k,x^k} =0} \cap B_1    } = \frac{1}{4} \abs{B_1}.
	\end{align}

	\noindent
	\textbf{Step 2.} \emph{Identifying the limit solution.}\\
	Let us now set $u_k(x):= \frac{u \bra {{r}_k x +x^k}}{{r}_k^2} $.
	It is known that (passing if necessary to a subsequence)
	\begin{align}
	u_k \to u_0 \quad \text{ in } C^{1,\alpha}_{\operatorname{loc}}(\R^N) \text{ as } k \to \infty,
	\end{align}
	where $u_0$ is  a global solution of the obstacle problem 
	\begin{align}
	\Delta u_0 = c(x^0) \chi_{\set {u_0>0}  } = \chi_{\set {u_0>0}  } \quad , \quad u_0 \geq 0 \quad \text{ in } \R^N
	\end{align}
	(cf.  \cite[Proof of Proposition 3.17 (iii)]{PetrosyanShahgholianUraltseva_book}).
	Employing once more \eqref{eq:local_continuity_of_coincidence_set_in_measure} together with \eqref{eq:measure_of_coincidence_set_is_that_of_quater_ball} and the strong $W^{2,p}_{\loc}$-convergence (cf. \cite[proof of Proposition 3.17 (v)]{PetrosyanShahgholianUraltseva_book}) we find that
	\begin{align} \label{eq:measure_of_coicidence_set_of_blow_up_with_moving_centre_in_ball}
	\tfrac{1}{4} \abs {B_1} = \abs {\set {u_k =0} \cap B_1} \to  \abs {\set {u_0 =0} \cap B_1} \quad \text{ as } k \to \infty.
	\end{align}
	This implies that $u_0$ cannot be a half-space solution. By Lemma \ref{blow-down-blow-up}, the unique blow-down limit of $u_0$
	is the polynomial $p$ and from \eqref{eq:measure_of_coicidence_set_of_blow_up_with_moving_centre_in_ball} we infer that $|\{ u_0=0  \}| \neq 0$. Together with the fact that $\{u_0=0\}$ is convex (cf. \cite[Theorem 5.1]{PetrosyanShahgholianUraltseva_book}) and the fact that convex sets with empty interior have zero Lebesgue-measure this implies that $\operatorname{int}(\{u_0=0\} ) \neq \emptyset$. From the ACF-argument in 
	\cite[proof of case 2 of Theorem II]{CaffarelliKarpShahgolian_Annals_2000} we conclude that $u_0$ is monotone in all directions in $\cN(p)$ and since $n = \operatorname{dim}(\cN(p)) \geq 1$ this implies that $\{u_0=0\}$ is \emph{unbounded} (in all directions in $\cN(p)$). 
	Furthermore the fact that $p$ is non-degenerate in all directions in $\cN(p)^\perp$	implies that $\{ u_0=0\}$ must be bounded in all these directions.\\
	Now we are able to apply Proposition \ref{thm:MainTheorem_Intro_I} to conclude that $\{u_0=0\}$ is either a cylinder {over} an $(N-k)$-dimensional paraboloid or a cylinder {over} an $(N-k)$-dimensional ellipsoid. Since the blow-down of $u_0$ is $p$ and vanishes in precisely $n$ independent directions we conclude that $\{u_0=0\}$ is either a cylinder with an ($N-n+1$)-dimensional paraboloid or an ($N-n$)-dimensional ellipsoid as base. 
\end{proof}

\begin{proof}[Proof of the Main Theorem]
	\mbox{} \\
	In the following we denote by \emph{cross section }at a free boundary point $x \in \partial \{u>0\}$ the set $\{y \in B_\delta(x) : u(y)=0~,~ y'' =x''\}$.
	\\
	\textbf{\eqref{item:first_part_of_theorem_singularities}  {\emph {Cross sections are $C^2$-perturbations of ellipsoids.}}}\\
	{\bf Step 1.} \emph{Cross sections that contain at least one \emph{regular} free boundary point.}
	\\
	Suppose towards a contradiction that 
	$(x^k)_{k \in \N} \subset \partial \set {u>0}$ is a sequence of \emph{regular} free boundary points such that
	\begin{align}
	x^k \to x^0 \quad \text{ as } k \to \infty  , \quad d_k := d((x^k)'') >0 \text{ for all } k \in \N
	\end{align}
	and that the statement in (i) does not hold.
	Passing to a subsequence,
	\begin{align}
	\tilde u_k(x) = u_{d_k,x^k}(x) := \frac{u(x^k+d_k x)}{d_k^2} \to \tilde u_0(x) \quad \text{ in } C_{\operatorname{loc}}^{1,\alpha}(\R^N) \text{ as } k \to \infty.
	\end{align}
	Note that because $x^0$ is a singular point it holds that $d((x^k)'') \to 0$ as $x^k \to x^0$.
	From Proposition \ref{lem:existence_of_non_trivial_scaling} we know that there is another subsequence and scalings $r_k \to 0$ as $k \to \infty$ such that
	\begin{align}
	u_k(x) = u_{r_k,x^k}(x) := \frac{u(x^k+r_k x)}{r_k^2} \to u_0(x) \quad \text{ in } C_{\operatorname{loc}}^{1,\alpha}(\R^N) \text{ as } k \to \infty
	\end{align}
	and $\set {u_0=0}$ is either a paraboloid (only possible if $n=1$) or a cylinder with an ($N-n+1$)-dimensional paraboloid or an ($N-n$)-dimensional ellipsoid as base. We  distinguish three cases:
	\begin{enumerate}[1.]
		\item \label{item:item3_in_proof_of_isolation_lemma} For a subsequence, $\frac{d_k}{r_k} \to \lambda \in (0,\infty)$ as $k \to \infty$.
		\item \label{item:item2_in_proof_of_isolation_lemma} For a subsequence, $\frac{d_k}{r_k} \to \infty$ as $k \to \infty$.
		\item\label{item:item1_in_proof_of_isolation_lemma} For a subsequence, $\frac{d_k}{r_k} \to 0$ as $k \to \infty$.
	\end{enumerate}
	
	As part of our proof works in the affine subspace $\{ y\in \R^N:  y'' =(x^k)''  \}$, let us use the notation 
	$$u_k'(x') := u_k(x',0), \qquad u_0'(x') := u_{0}(x',0), $$
	$$  \tilde u_k'(x') := \tilde u_k(x',0), \qquad \tilde u_0'(x') := u_{0}(x',0).$$
	
	Now in  Case 1 we have that $\tilde{u}_0$ is a rescaled version of $u_0$ and Lemma \ref{blow-down-blow-up} tells us
	that 
	the blow-down limit of $\tilde u_0$ is the polynomial $p$
	whence 
	Proposition \ref{thm:MainTheorem_Intro_I} implies that a scaled and translated instance of
	$\{ \tilde u_0'=0\}$ is the ellipsoid $E'$ and $\{ \tilde{u}_0 =0 \}$ is cylindrical in $n$-directions (ellipsoid-cylinder case) or ($n-1$)-directions (paraboloid-cylinder case) that are contained in $\cN(p)$.
	
	Finally, using 
	stability of regular  free boundaries\footnote{\label{foot-flatness}
	Here we use the fact that $C^0$-closeness of $\tilde{u}_0$ and $\tilde{u}_k$ together with nondegeneracy (see \cite[Lemma 3.1]{PetrosyanShahgholianUraltseva_book}) implies that $\partial \{u_k >0\}$ is flat and by \cite{Caff-81-compactness} uniformly bounded in $C^{2,\beta}_{\operatorname{loc}}$. Hence by compactness we get convergence for a subsequence in $C^{2,\alpha}_{\operatorname{loc}}$.
	}
	we obtain
	$C^2$-convergence of a subsequence of sets $\{ \tilde{u}_k'=0\}$ to the ellipsoid $E'$, and of $\{ \tilde{u}_k =0 \}$ to a cylinder in $(N-n)$ or $(N-n+1)$ independent directions that are contained in $\cN(p)$, a contradiction to our assumption that (i) does not hold.
	\\
	Case 2 is more involved as we have to exclude the possibility of tiny components of the coincidence set, with cross sections being relatively far from each other.
	First, we  show that $\tilde u_0$ is not a half-space  solution. Indeed,
	using the ACF monotonicity formula (in the same notation as in the proof of Proposition \ref{lem:existence_of_non_trivial_scaling}) we conclude from \cite[Theorem 1.6]{CaffarelliJerisonKenig_new_monotonicity_formula_2002} that there is $C< \infty$ such that for every $e \in \partial B_1 \setminus  \cN(p)$,
	\begin{align}
	0< \phi \bra {\partial_e u_0,1,0} & \gets \phi \bra {\partial_e u_k,1,0} = \phi \bra {\partial_e u_{r_k, {x}^k},1,0} 
	\leq (1+d_k)\phi \bra {\partial_e u_{d_k, {x}^k},1,0} + {C} d_k \\
	&= (1+d_k) \phi \bra {\partial_e \tilde{u}_k,1,0} + {C} d_k
	\to \phi \bra {\partial_e \tilde{u}_0,1,0} 
	\end{align}
	as $k \to \infty$. Since the ACF-monotonicity functional is zero for half-space solutions we conclude that   $\tilde{u}_0$ is not a half-space  solution.
	
	Using the fact that $d((x^k)'') \to 0$ as $x^k \to x^0$ we can again invoke  Lemma \ref{blow-down-blow-up}, which  implies that 
	the unique blow-down limit of $\tilde u_0$ is the polynomial $p$.
	This in turn enables us to apply Proposition \ref{thm:MainTheorem_Intro_I}.
	We thus obtain that either $|\{ \tilde u_0=0\}|=0$, or $\{\tilde u_0 =0 \}$ is a 
	paraboloid (only possible if $n=1$) or a cylinder with an $(N-n+1)$-dimensional paraboloid or an $(N-n)$-dimensional ellipsoid as base.
	If $|\{\tilde u_0 =0 \}| =0$, Lemma \ref{blow-down-blow-up} tells us that
	$\tilde u_0 \equiv p$ and since $p(x) \geq c_p |x'|^2$ this is a contradiction to the definition of $d_k$ by which
	$\partial B_1' \cap \{ \tilde u_0'=0\} \ne \emptyset$.
	In case that $\{ \tilde{u}_0 =0\}$ is a paraboloid or a cylinder with a paraboloid or an ellipsiod as base, using once more the information that 
	the blow-down limit of $\tilde u_0$ is the polynomial $p$,
	Proposition \ref{thm:MainTheorem_Intro_I} implies that a scaled and translated instance of
	$\{ \tilde u_0'=0\}$ is the ellipsoid $E'$.
	Together with local $C^2$-convergence of the sets
	$\{ \tilde u_k=0\}$, this poses a contradiction in Case 2.

	In Case 3 we first recall that $\set {u_0=0}$ is either a paraboloid (only possible if $n=1$) or a cylinder with paraboloidal or ellipsoidal base, but the 
	assumption $d_k/r_k \to 0$ as $k \to \infty$ forces  
	$\set {u_0=0}$ to be  a paraboloid or cylinder {over} a paraboloid with tip at the origin.
	Assume towards a contradiction that this observation is not true, i.e. that $\set {u_0=0}$ is a paraboloid / cylinder {over} a paraboloid with tip not in the origin or a cylinder with an ellipsoid as base. This implies that $\set {u_k'=0}$ has positive diameter that is (uniformly in $k$) bounded from below. But this contradicts the assumption that $\frac{d_k}{r_k} \to 0$ as $k \to \infty$.
	So $\partial \{ u_0=0\}$ is given by the graph of a quadratic polynomial
	$f_0(x')$ satisfying
	$f_0(x')\ge c_1 |x'|^2$. There is ${e}'' \in \partial B_1''$ and $\eta >0$ such that $ \{u_0 =0\} \cap B_\eta = \{ y \in B_\eta : {e}'' \cdot y'' \geq f_0(y')    \}$
	and $e''$ my be expressed as $$e'' =  \frac{  \int_{ \{u_0=0\} \cap B_{1}  } (-y)'' \dx{y} }{ \abs{\int_{ \{u_0=0\} \cap B_{1}  } (-y)'' \dx{y}}  } $$ and $\nu''(x^k) \to e''$ as $x^k \to x^0$.
	{S}tability of regular solutions (relying on flatness-implies-regularity, see Footnote \ref{foot-flatness})
	implies that
	$\partial \{ u_k=0\}$ is for sufficiently large $k$ given by the graph
	of a $C^{2,\alpha}(\R^{N-1})$-function $f_k$ such that
	\begin{align}
	\{ u_k = 0 \} \cap B_\eta = \set { y \in B_\eta : {e}'' \cdot y'' \geq f_k(y', \pi_{{e}''} y'')   },
	\end{align}
	(where $\pi_{{e}''} := I_{{e}''} \circ P_{{e}''} : \R^n \to \R^{n-1}$ and $P_{{e}''} : \R^n \to \{ y'' \in \R^n : {e}'' \cdot y'' =0  \} \subset \R^n$ is the orthogonal projection in the direction ${e}''$ and $I_{{e}''} : \{  y'' \in \R^n : {e}'' \cdot y'' =0  \} \hookrightarrow \R^{n-1}$ the canonical isomorphism (given by rotation)) and
	$f_k\to f_0$ in $C^2$ locally
	in $\R^{N-1}$, the tip of the graph of $f_k(y', \pi_{{e}''} 0)$ converges to
	the origin as $k\to\infty$.
	Translating each graph, we may assume that
	$0=f_k(0)=|\nabla f_k(0)|$ for all $k \in \N$.
	Finally, we introduce the rescaled functions\footnote{This is  an inhomogeneous scaling of the original free boundary, but a homogeneous scaling of each cross section.} 
	\begin{align}
	g_k(y',\pi_{{e}''}y'') :={\bra {\frac{d_k}{r_k} }^{-2 }} {f_k \bra {\frac{d_k}{r_k}  \bra {y' , \pi_{{e}''}y''  }    } }.
	\end{align}
	
	Then $g_k$ converges in $C^2$ locally in $\R^{N-1}$ to the same polynomial
	$f_0$. The set $\{ \tilde{u}_k=0\} \cap \{y \in B_\eta : y'' \cdot {e}''=0\}$ corresponds for large $k$ to
	$\{ y \in B_\eta: y'' \cdot {e}'' = t_k \geq g_k(y' , \pi_{{e}''} y''  )\}$  for some $t_k$. Now
	the diameter of $\{ u_k'=0\}$
	being $d_k/r_k$ implies the diameter of $\{ y': g_k(y', \pi_{{e}''} 0  )\le t_k\}$ being $1$
	such that $0< T_1 < t_k < T_2 <+\infty$ for all sufficiently large $k$.
	Finally note that
	by the implicit function theorem, the sublevel sets 
	$\{ g_k \leq t_k \}$ converge for every sequence
	$(t_k)_{k\in \N} \subset [T_1,T_2]$ locally
	in $C^2$ to a scaled instance of
	$E'\times \R^{n-1}$ (here we used Proposition \ref{thm:MainTheorem_Intro_I} as in Case 1 and Case 2). So we obtain a contradiction in Case 3.
	\\
	{\bf Step 2.} \emph{Cross sections that do not contain a \emph{regular} free boundary point} \\
	Last, suppose that there is a sequence $x^k\to x^0$ as $k\to\infty$
	such that $d((x^k)'')>0$ but the set
	$\{ y\in B_\delta(x^0): y''=(x^k)'' \}$ contains no
	regular free boundary point.
	Then that set contains at least two singular free boundary points
	$x^k$ and $\tilde x^k$.
	Let the homogeneous quadratic polynomial $q_k$ be 
	a blow-up limit of $u$ at $x^k$.
	Setting $r_k := |\tilde x^k-x^k| \to 0$ as $k \to \infty$ and
	$u_k(x) := u(x^k+r_kx)/r_k^2$, passing if necessary to a subsequence,
	we may assume that
	$u_k \to u_0$ in $C^{1,\alpha}_{\operatorname{loc}}(\R^N)$.
	Using the ACF monotonicity formula
	we may estimate for each $e \in \partial B_1$, $\rho >0$ and $\epsilon >0$, choosing first $\tilde{r}_0$ sufficiently small and then $k$ sufficiently large,
	\begin{align}
	&\phi \bra {\partial_e q_k,1,0}  = \lim \limits_{r\to 0} \phi(\partial_e u, r, x^k)  \leq
	\phi \bra { \partial_e u , r_k \rho, x^k    } =
	\phi \bra { \partial_e u_k , \rho, 0    } 
	\\
	&\leq \epsilon + \phi \bra {\partial_e u , \tilde{r}_0, x^k} 
	\leq 2 \epsilon + \phi \bra { \partial_e u, \tilde{r}_0, x^0  } {\leq} 3 \epsilon + \phi \bra {\partial_e p,1,0}.
	\end{align}
	Passing if necessary to another subsequence we may assume that
	$q_k\to q$ as $k\to\infty$.
	We obtain that for all $e \in \partial B_1$
	\begin{align}
	\phi \bra {\partial_e q , 1,0   } 
	\leq \phi \bra { \partial_e u_0 , \rho, 0 }
	\leq \phi \bra {\partial_e p,1,0 },
	\end{align} 
	whence \cite[Lemma 14]{Caffarelli-revisited} implies
	$q\equiv p$. But then the ACF monotonicity formula (cf. \cite[Theorem 2.9]{PetrosyanShahgholianUraltseva_book}) implies
	that $u_0$ itself is a homogeneous quadratic polynomial
	which must by \cite[Lemma 14]{Caffarelli-revisited} equal
	$p$.
	Thus $u_0>0$ in $\{ y: y''=0\}\setminus \{ 0\}$ contradicting
	the assumption 
	$u(\tilde x^k)=0$  and the choice of the scaling $r_k$.
	\\
	\textbf{\eqref{item:second_part_of_theorem_singularities}  \emph{ Behavior close to diameter zero points.}}\\
	Here we will prove \eqref{item:second_part_of_theorem_singularities} of Theorem \ref{thm:MainTheoremII}, so let $x$ be a free boundary point
	close to $x^0$ such that $d(x'' )=0$.
	We will distinguish two cases:
	\\ 
	\emph{ Case 1: $x$ is a singular free boundary point.} \\
	Let us assume towards a contradiction that there is a sequence of \emph{singular} points $(x^k)_{k \in \N}$ satisfying $x^k \to x^0$ as $k \to \infty$ and let $q_k$ be the blow-up polynomial of $u$ in $x^k$, i.e.
	\begin{align}
	u_{r,x^k} (z):= \frac{u(rz + x^k)}{r^2} \to q_k(z)  \quad \text{ in } C_{\loc}^{1,\alpha}(\R^N)  \text{ as } r \to 0.
	\end{align}
	Let $P, Q_k \in \R^{N \times N}$ satisfying $\operatorname{tr}(P) = \operatorname{tr}(Q_k) = \frac{1}{2}$ be symmetric, positive semidefinite matrices such that $p(z) = z^T P z$ and $q_k(z) = z^T Q_k z$ for all $z \in \R^N$. From \cite[Corollary 10 d)]{Caffarelli-revisited} we infer that $|P-Q_k| \to 0$ as $k \to \infty$. It follows that for every $\mu >0$: $\{q_k =0\} \cap B_1 \subset B_\mu(\{p=0\}) \cap B_1$ for sufficiently large $k$. \\
	Combining this with the known fact that that the coincidence set close to singular points is contained in an arbitrarily small cone around the zero set of the blow-up polynomial, i.e. for each singular point $x^k$ and each $\epsilon >0$ there is $\kappa(\epsilon, x^k)>0$ such that
	\begin{align} \label{eq:inclusion_of_coincidence_set_in_cone}
	\{u=0\} \cap B_{\kappa(\epsilon, x^k)}(x^k) \subset \set {y \in B_{\kappa(\epsilon, x^k)}(x^k) : \epsilon |y-x^k| > \operatorname{dist}(y-x^k,   \{q_k =0\}) },	
	\end{align}
	concludes the proof.
	\\
	Inclusion \eqref{eq:inclusion_of_coincidence_set_in_cone} can be seen as follows. If this fact is not true, then for a  \emph{singular} free boundary point $x^k$ there is a sequence of points $(x^j)_{j \in \N} \subset \{u=0\}$ satisfying $x^j \to x^k$ as $j \to \infty$ such that $\epsilon |x^j-x^k| \leq \operatorname{dist}(x^j-x^k, \{q_k =0\})$ for all $j \in \N$.
	Passing if necessary to a subsequence, $\tfrac{x^j-x^k}{|x^j-x^k|} \to \xi^k \in \partial B_1$ as $j \to \infty$ and 
	\begin{align} \label{eq:xi_bounded_away_from_q=0}
	\epsilon \leq   \frac{\operatorname{dist}(x^j-x^k, \{ q_k=0 \} )}{|x^j-x^k|}    \to \operatorname{dist}(\xi^k, \{q_k =0\})  \quad \text{ as } j \to \infty.
	\end{align}
	But then, setting $r_j := |x^j-x^k|$,
	\begin{align}
	q_k(\xi^k) \leftarrow u_{r_j,x^k} \bra { \tfrac{x^j-x^k}{|x^j-x^k|} } := \frac{u \bra{\tfrac{x^j-x^k}{|x^j-x^k|} r_j + x^k  } }{r_j^2} = \frac{u(x^j)}{r_j^2}  =0 \quad \text{ as } j \to \infty.
	\end{align}
	But this is a contradiction to \eqref{eq:xi_bounded_away_from_q=0}, so \eqref{eq:inclusion_of_coincidence_set_in_cone} is proved.
	\\ 
	\emph{Case 2: $x$ is a regular free boundary point.}\\
	Supposing towards a contradiction
	that the statement does not hold in any neighborhood of $x^0$
	we obtain  a sequence $(x^k)_{k\in \N}$ of
	regular free boundary points satisfying $d((x^k)''  )=0$
	converging to $x^0$ and (cf. Proposition \ref{lem:existence_of_non_trivial_scaling}) a sequence
	$r_k \to 0$ such that 
	$u_k = u_{r_k,x^k}$ converges to a solution $u_0$, that is a paraboloid solution or a paraboloid-cylinder solution with tip in the origin. Assume towards a contradiction that this is not true and $\{ u_0=0  \}$ is either a paraboloid / paraboloid-cylinder with positive diameter in the subspace $\set {y''=0}$ or a cylinder with an ellipsoid as base. Then non-degeneracy of solutions of the obstacle problem \cite[Lemma 3.1]{PetrosyanShahgholianUraltseva_book} together with the fact that these coincidence sets are convex and have non-empty interior implies that there is $\kappa>0$ such that for all sufficiently large $k \in \N$, $\set {u_k=0} \cap   \set {y''=0} \supset B'_\kappa$. But this contradicts the assumption that $d((x^k)'')=0$.
	
	So as {Step 1 Case 3} $\{ u_0=0\}$ is given by the graph of a quadratic polynomial
	$f_0(x')$ satisfying
	$f_0(x')\ge c_1 |x'|^2$ in the sense that there is ${e''} \in \partial B_1''$ and $\eta >0$ such that $\{u_0 =0\} \cap B_\eta = \{ y \in B_\eta : {e''} \cdot y'' \geq f_0(y')    \}$.
	Stability of regular solutions (relying on flatness-implies-regularity, see Footnote \ref{foot-flatness})
	implies that
	$\partial \{ u_k=0\}$ is for sufficiently large $k$ given by the graph
	of a $C^{2,\alpha}(\R^{N-1})$-function $f_k$ such that
	\begin{align}
	\partial \{ u_k = 0 \} \cap B_\eta = \set { y \in B_\eta : {e''} \cdot y'' = f_k(y', \pi_{{e''}} y'')   },
	\end{align}
	(where $\pi_{{e''}} := I_{{e''}} \circ P_{{e''}} : \R^n \to \R^{n-1}$ and $P_{{e''}} : \R^n \to \{ y'' \in \R^n : {e''} \cdot y'' =0  \} \subset \R^n$ is the orthogonal projection in the direction ${e''}$ and $I_{{e''}} : \{  y'' \in \R^n : {e''} \cdot y'' =0  \} \hookrightarrow \R^{n-1}$ the canonical isomorphism (given by rotation)) and
	$f_k\to f_0$ in $C^2$ locally
	in $\R^{N-1}$ and the tip of the graph of $f_k$ converges to
	the origin as $k\to \infty$.

	Since $D^2 f_0$ is a (constant) positive definite matrix depending only on $x'$
	it follows that $c_3 |x'|^2 \le f_k(x', \pi_{{e''}}x '') \le C_4 |x'|^2$
	for $x \in B_\eta$, proving the estimate claimed in the statement
	for our specific subsequence and thus yielding a contradiction. \\
	\textbf{\emph{The oscillation of $\nu''$ close to $x^0$.}}\\
	Note that $\operatorname*{osc}_{ y \in B_{d(x'')}(x) \cap \{u=0 \}} \nu''(y) = 0$ if $d(x'')=0$. 
	When $d(x'')>0$ we make only claims on $\nu''$ in \eqref{item:first_part_of_theorem_singularities_b} and 
	the statement concerning the oscillation follows
	from the already proven fact that each limit $\{\tilde u_0 =0\}$ is 
	paraboloid (only possible if $n=1$) or a cylinder with an $(N-n+1)$-dimensional paraboloid as base.
	As a consequence of the stability of the regular free boundary (see Footnote \ref{foot-flatness}) we only need to justify that our expression for $\nu''(x)$ converges to the direction in which the limit-paraboloid $\{\tilde{u}_0 =0\}$ opens as $x \to x^0$.
	
	We know that the paraboloid must point in a direction where the blow-down $p$ of $\tilde{u}_0$ -- i.e. the blow-up of $u$ at $x^0$ -- is degenerate, hence it points in a direction in $\cN(p)$. It remains to localize the direction in which the paraboloid(-cylinder) $\{\tilde{u}_0=0\}$ opens within $\cN(p)$. Because of the symmetry of the paraboloid(-cylinder) with respect to all direction in $\cN(p)$ but the direction in which the paraboloid opens, this special direction may be expressed by the following `center of mass'
	$$z \mapsto  \frac{\int_{ \{\tilde u_0=0\} \cap B_1 }(z-y)'' \dx{y}}{\left | \int_{ \{\tilde u_0=0\} \cap B_1} (z-y)'' \dx{y} \right | },$$
	where $z \in \partial \{\tilde{u}_0 >0\}$.

	
\end{proof}

\bibliographystyle{abbrv}
\bibliography{references}
\end{document}